\newcommand{\Greg}{$G^N_{\mathrm{reg}}\,$}
\newcommand{\cd}{|}
\newcommand{\pa}{\mathrm{par}}
\newcommand{\pst}{\mathrm{pst}}
\newcommand{\ant}{\mathrm{ant}}
\newtheorem{coro}{Corollary}
\newtheorem{theorem}{Theorem}
\newcommand{\ci}{\mbox{\protect{ $ \perp \hspace{-2.3ex}
\perp$ }}}
\newcommand{\n}[0]{\hspace*{.35em}}
\newcommand{\nn}[0]{\hspace*{.7em}}
\newcommand{\snode}{\mbox {\large
{$\mbox{$\circ$}$}}}
\newcommand{\ful}{\mbox{$\, \frac{ \nn \nn \;}{ \nn \nn
}$}}
\newcommand{\fla}{\mbox{$\hspace{.05em} \prec
\!\!\!\!\!\frac{\nn \nn}{\nn}$}}
\newcommand{\dal}{\mbox{$  \frac{\n}{\n}
\frac{\; \,}{\;}  \frac{\n}{\n}$}}
\renewcommand\section{\@startsection{section}{1}{\z@}%
{-3.25ex\@plus -1ex \@minus -.2ex}{1.5ex \@plus .2ex}%
{\normalfont\large\bfseries}}
\renewcommand\subsection{\@startsection{subsection}{2}{\z@}%
{-3.25ex\@plus -1ex \@minus -.2ex}%
{1.5ex \@plus .2ex}%
{\normalfont\normalsize\bfseries}}
\renewcommand\subsubsection{\@startsection{subsubsection}{3}{\z@}%
{-3.25ex\@plus -1ex \@minus -.2ex}%
{1.5ex \@plus .2ex}%
{\normalfont\normalsize\bfseries}}
\renewcommand\paragraph{\@startsection{paragraph}{4}{\parindent}%
{3.25ex \@plus1ex \@minus .2ex}%
{-1em}%
{\normalfont\normalsize\bfseries}}
\renewcommand{\baselinestretch}{1.5}
\begin{document}
\noindent  {\footnotesize This is the peer reviewed version of the following article: STAT, \textbf{5}, (2016), 286-294, which has been published in final form at doi:10.1002/sta4.122. This article may be used for non-commercial purposes in accordance with Wiley Terms and Conditions for Self-Archiving;
also on Arxiv 1512.09016}.\\

\noindent{{\begin{center}{\Large \bf  Pairwise Markov properties for regression graphs}\end{center}}
\noindent {\begin{center}{\bf{Kayvan Sadeghi}}\\
{\em University of Cambridge, Cambridge, UK}\end{center}

\noindent {\begin{center}{\bf{Nanny Wermuth}} \\
 {\em Chalmers University of Technology, Gothenburg, Sweden and Gutenberg-University, Mainz, Germany}\end{center}

\noindent {\bf Abstract:} {\small  With a  sequence of regressions,  one may generate joint probability distributions. One starts with
a joint, marginal distribution of context variables having possibly a  concentration graph structure  and continues  with an ordered sequence  of   conditional distributions, named regressions in joint responses.  The involved random variables may be discrete, continuous or of both types. Such a generating process specifies  for each response a conditioning set which contains just its regressor variables and  it leads to at least one valid ordering of all nodes in  the corresponding regression graph which has  three types of edge; one for undirected dependences among context variables, another  for undirected dependences among joint responses and one for any directed dependence of a response on a regressor variable.
For this regression graph, there are several definitions of pairwise  Markov properties, where each interprets the conditional independence associated with a missing edge in the graph in a different way. We explain how these  properties
arise, prove their equivalence for compositional graphoids and  point at the equivalence of each one of them  to the global Markov property.\\
\noindent \emph{Keywords:} Chain graph; compositional graphoid; graphical Markov model; intersection; pairwise Markov property;  sequence of regressions.
}

\section{Introduction} Regression graph models are the subclass of graphical Markov models that is best suited to capture
pathways of development, no matter whether the generating process arises  in observational studies or  in intervention studies, see \citet{Wer15}. For prospective studies, time provides a  partial ordering of a finite number of  variables and leads to  an ordered sequence of joint and single response variables. In other studies,  any  ordering is provisional but, typically,  may be agreed upon by  researchers in the given field of study.

Regression graph models  extend models proposed  by \citet{CoxWer93}, illustrated there with several small sets of data having some  joint responses and  just linear relations; for linear regressions see e.g.~\citet{Weisb14}. Additional in regression graph models are context variables which capture  baseline features  of  study individuals or of study conditions. Context  variables may be controlled  or fixed by study design. Regression graphs represent an essential extension of fully directed acyclic graphs
whenever one wants to model  that explanatory variables  affect  directly several response variables at the same time and to take the baseline information of context variables explicitly  into account.

Individual random variables  can be continuous, discrete or of both types.  For such mixed responses, the conditional distributions may, for instance, be conditional Gaussian
regressions; see \citet{LauWer89},  be approximated by generalized linear regressions;  see \citet{McCulNel89}, \citet{AndSkov10}, or if needed, be any other type of  nonlinear regression, which permits to represent  the relevant conditional independences and dependences.

 For discrete  variables,  ordered sequences of joint regressions  without context variables have been studied as  chain graph  models of type IV by  \citet{Drton09}.
A parameterization and maximum-likelihood estimates were  derived by \citet{MarLup11}.

For context variables in general, the independence structure is captured by  an undirected graph in vertex set $V=\{1, \ldots, d_V\}$ and edge set $E$, known as the concentration graph. The name reminds one that for regular joint Gaussian
distributions,  a missing edge for vertex pair $i,j$ shows as  an $ij$-zero in its concentration matrix, the inverse covariance matrix; see \citet{Dempster72}, \citet{CoxWer93}. In general, at most one  undirected $ij$-edge, $i\ful j$, couples the vertices $i,j$, and each missing edge for $i,j$ means  $X_i$ is independent of $X_j$ given the remaining variables. This is written compactly in terms of vertices, also called  nodes,
 as $i\ci j|V\setminus\{i,j\}$; see \citet{Dawid79}.

For exclusively discrete context variables, the
joint distributions have been studied as Markov fields and log-linear interaction models; see \citet{DarLauSp80}, named later also discrete concentration graph models.  For these, one also knows when single unobserved variables can  be identified; see \citet{StangVant13}.

 For regression graphs in node set $N=\{1, \ldots d\},$  where 
  each node represents a variable,  the edge set contains in general  three types of edge of different interpretation. The  $ij$-edge among two context nodes is denoted by a full line, $i\ful j$.
Edges  for directed dependences are $ij$-arrows, $i \fla j$. Each $ij$-arrow starts at a regressor node  $j$ and points to a response $i$.  
The $ij$-edge between two individual
response nodes within a given joint response is denoted by a dashed line, $i \dal j$. In some  literature, this is  replaced by an double-headed arrow. 
Undirected graphs of this type are  named conditional covariance graphs to remind one that for  joint Gaussian distributions, a missing $ij$-edge shows as an $ij$-zero  in the conditional covariance matrix of a  joint response  $X\!:=(X_1, \ldots X_p)$, given the union of their individual  regressors.

Among the results  available so far for regression graphs are (1) a simple graphical criterion to decide whether two regression graphs, with the same node and edge set but different types of edge, define the same independence structure; see \citet{WerSadeg12}, (2) path criteria and zero-matrix criteria for separation;  see
\citet{SadegLau14}, \citet{Wer15}, (3) induced graphs, after marginalising over some nodes and conditioning on others, which preserve the independence structure
of the generating  regression graph, see \citet{Sadeg13}, (4) induced graphs, which only predict dependences  generated by a regression graph between  $X_\alpha$ and $X_\beta$ given $X_c$,  for disjoint  $\alpha, \beta, c, m$ partitioning $N$.  This requires an edge-minimal graph, that is one  in which no edge can be removed without introducing another independence constraint; see \citet{Wer12}, (5) implementations  in the computing environment R; see \citet{SadegMar12}.

Independence properties implied by the standard graph theoretic separation criterion on graphs listed here as $(i)$ to $(v)$ in Section 3, were introduced  by \citet{PearlPaz87} for concentration graphs and said to define a graphoid.   More importantly, they  derived that for graphoids all independences  implied by the given graph, may  be obtained either from the list of independences defined by its missing edges or by separation in undirected graphs.
This became known later as the equivalence of the pairwise Markov property and the global Markov property.

Such an equivalence has been proven for more general classes of graphs, including regression graphs, for which the global  Markov property is also generalized
compared to concentration graphs; see  \citet{SadegLau14}. This uses  the  additional independence property of composition, listed here as property $(vi)$ in Section 4. For such compositional graphoids, several pairwise Markov properties may be defined, which  give
alternative  independence interpretations to a missing edge. One question to be solved was whether each of them is suitable to  derive the regression-graph structure that is all independences implied
by the graph.

Here, we define four  pairwise Markov properties and  show with Theorem \ref{thm:1} that they are equivalent with respect to  regression graphs.  Thus, any one of the four pairwise properties characterizes the regression-graph structure. By using the  equivalence of one of them to the global Markov property, proven in \citet{SadegLau14}, it follows that each one is also equivalent to the global Markov property.

The smallest conditioning set for the independence of two response components will be shown to be
the set of important regressors for just one of the responses. This result  for independences is of special importance since it  contrasts with a well-known result for estimating the
dependences of responses on their regressors:  when strong residual dependences remain among the responses, after one has been regressing each response
component separately on  different sets of regressors, distorted parameter estimates may be obtained, see \citet{Haavelm43}, \citet{Zellner62},
\citet{DrtonRich04}. For an exposition of these ideas in terms of Gaussian variables, standardized to have zero means and unit variances, see
\citet{CoxWer93}, beginning of Section 4.

We introduce in Section 2 more definitions and concepts for  regression graphs.
In Section 3, we provide independence properties, give four pairwise Markov properties for regression graphs, and prove their equivalence for compositional graphoids.
We end with a discussion in Section 4.
\section{Some more definitions and concepts for regression graphs}
A simple graph with a finite node set $N=\{1,\dots, d\}$ has in its edge set $E$  at most one $ij$-edge for nodes $i,j$ in $N$.
When $i,j$ are the endpoints of an edge, the node pair is said to be coupled or to be adjacent and  denoted  by $i\sim j$.
A simple graph is complete when all pairs are coupled by an edge.

A parent of node $i$ is the starting node $j$ of an arrow pointing to $i$ and indicates a regressor variable $X_j$ of   response  $X_i$. The set of parents of a node $i$ is denoted by $\pa(i)$.
An $ij$-path is a sequence of edges passing through distinct inner nodes, connecting the endpoints $i$ and $j$ of the path. By convention, an $ij$-edge is the shortest  type of path.

 A subgraph induced by a subset $a$ of $N$ consists of
  the node set $a$ and of the edges
  present in the graph among the nodes of $a$.
 A subgraph is connected if there is a path between any  distinct node pair. 
 A maximally connected subgraph is called a connected component.

 A direction-preserving $ij$-path to $i$  from $j$  consists of arrows, all  pointing from $j$  towards $i$. In  such a direction-preserving path,  node $j$ is named an ancestor of $i$.
 An anterior $ij$-path consists of  a direction-preserving path  followed by an undirected  path, such as:
 \begin{displaymath}
i \
\fla \underbrace{\overbrace{ \snode \fla \snode, \ldots,  \snode \fla  \snode}^{\text {\normalsize{ancestors of  \textit{i}}}}\ful  \snode\ful \snode, \ldots , \snode \ful j}_{\text{\normalsize{anteriors of \textit{i}}}}.
 \end{displaymath}
Thus, anterior nodes extend the notion of ancestors. The set of anterior nodes of node $i$ is denoted by $\ant(i)$.

The regression graph, \Greg\!,  is a simple graph with a finite node set $N$\! and an edge set of  three different types of edge. Here we use arrows,  dashed and full lines.
The properties of \Greg are  (1) no arrow points to a full line and no dashed line is  adjacent to a full line, (2) there is a valid ordering  of its  unique set of connected components,  $(g_1,\dots, g_Q)$.  These components  result  by removing all  arrows from \Greg\!.  The connected components of dashed lines  can be ordered such that  $g_q<g_l$ if there is an arrow pointing  to a node in $g_q$ from a node in $g_l$.  Connected components  of  full lines have a higher order than those of dashed lines and can be ordered in any way.

The induced subgraph of \Greg  of only arrows and dashed lines is called the  response set, $u$,   the complimentary one  of only full lines  is the context set, $v$ so that $N=(u,v).$  Accordingly, connected components of $u$ are the response components, those of  $v$ are the context components. 
For a node $i$ in  response set $g_q$,  its past is denoted by $\pst(i)$ and  it consists of  all nodes in components having a higher order than
 $g_q$.

Although for  the regression-graph structure, only  a  valid ordering of its connected components 
is essential, one can indeed extend a  valid ordering of these components into a complete valid ordering of all nodes, $(1,\dots,N)$, where $i<j$ if $q<l$ for $i\in g_q$ and   $j\in g_l$.  This implies that $i<j$ if there is an arrow pointing  to $i$ from $j$. 
The past of a node $i$  in  response component $g_q$  contains  never any node 
$j$ of $g_q$ even if $j$ is  larger in a complete valid ordering of the nodes.

Conditional independences are captured by  missing edges in \Greg\!.
A missing edge for $i,j$  in \Greg is denoted  by $i\nsim j$. For  $i\nsim j$, there is some subset $c$ of $N\setminus \{i,j\}$ such that in the graph $X_i$ is conditionally independent of $X_j$ given $X_c$, written
as $i\ci j|c$. Four different ways of defining $c$ are studied in the next Section 3.

\section{Pairwise Markov properties for regression graphs}
The following  properties have been defined  for conditional independences of  probability distributions. Let $a,b,d,c$ be disjoint subsets of $N$,  where $c$ may be the empty set.
\begin{itemize}	
\item[\n] \hspace{6mm} $(i)$ symmetry:  $a\ci b|c  {\implies}  b\ci a|c$\,;
\item[\n] \hspace{6mm} $(ii)$ decomposition:  $a\ci bd|c  {\implies}  (a\ci b|c$ {\em and} $a\ci d|c$)\,;
\item [\n] \hspace{6mm} $(iii)$ weak union:   $a\ci bd|c  {\implies} (a\ci b|dc$ {\em and} $a\ci d|bc$)\,;
\item [\n] \hspace{6mm} $(iv)$  contraction:   $( a \ci b|dc$  {\em and }   $a\ci d|c) \iff a \ci bd|c$\,;
\item [\n] \hspace{6mm} $(v)$  intersection:  ( $a \ci d|bc$  {\em and } $a\ci b|dc ) {\implies}  a\ci bd|c$\,;
\item [\n] \hspace{6mm} $(vi)$  composition:  ($a \ci d|c$ {\em and }$ a \ci b|c  ) {\implies}  a\ci bd|c$\,.
\end{itemize}
Note that composition defines the reverse direction of decomposition. Intersection gives the reverse
direction of  weak union and  properties $(i)$ to $(iv)$ are the independence properties of all probability distributions;  see \citet{Dawid79}, \citet{Stud89}.

The first five independence properties, listed above,  give the   graphoids of
 \citet{PearlPaz87},  see also \citet{Stud89, Stud05}.
If, in addition,  the composition property holds, one speaks of  a compositional graphoid.   For   compositional graphoids,  pairwise independences can be combined to obtain global independence statements  of the type $a\ci b\cd c$ for general classes of graphs, which include
regression graphs; see \citet{SadegLau14}.  Thus,   the above six properties are also satisfied by \Greg\!.  They are  the properties used  here in the proof of  Theorem \ref{thm:1}.

Let \Greg  be a regression graph with  a valid ordering of the node set $N=(1,\dots, d)$, which necessarily conforms with a valid ordering of its connected components. Based on this ordering,  we always assume in this section that  $i<j$ for the node pair $i,j$. To define  pairwise Markov properties for \Greg\!, we use the following notation for parents, anteriors and the past of node pair $i,j$:
\begin{eqnarray*}\pa(i,j)&=&\pa(i)\cup\pa(j)\setminus\{i,j\},\\ \ant(i,j)&=&\ant(i)\cup\ant(j)\setminus\{i,j\},\\ \pst(i,j)&=&\pst(i)\cup\pst(j)\setminus\{i,j\}.
\end{eqnarray*}

The distribution  $\mathcal{P}$ of $(X_n)_{n\in N}$ satisfies a pairwise Markov property  $(Pm)$, for $m=1,2,3,4$, with respect to \Greg if
for $i$ and $j$ in the context set, $v$,
\begin{eqnarray*}
i\not\sim j &\implies& i\ci j\cd v\setminus\{i,j\}\,,
\end{eqnarray*}
and for $i$ in the  response  set, $u$,
\begin{eqnarray*}
(P1):  i\not\sim j &\implies& i\ci j\cd \pst(i,j)\,,\\
(P2):  i\not\sim j &\implies& i\ci j\cd \ant(i,j)\,,\\
(P3): i\not\sim j& \implies & i\ci j\cd \pa(i,j)\,,\\
(P4): i\not\sim j& \implies &i\ci j\cd \pa(i).
\end{eqnarray*}
Notice that in (P4), $\pa(i)$ may be replaced by $\pa(j)$ whenever the two nodes are in the same connected component. 
Consider, for example,  the graph of Figure~\ref{fig:1} with $N=(1,\ldots, 9)$.
\begin{figure}[H]
\centering
\includegraphics[scale=0.55]{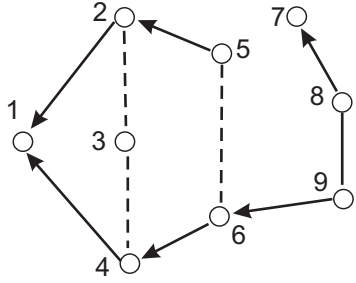}
\caption{{\footnotesize A regression graph.}}\label{fig:1}
\end{figure}

 For the uncoupled pair of nodes $2,4$,  the nodes in its past  are $\pst(2,4)=\{5,6,7,8, 9\}$, the anteriors are $\ant(2,4)=\{5,6,8,9\}$, the parents are $\pa(2,4)=\{5,6\}$ while the parent of node $2$ alone is node $5$, so that
by the four pairwise Markov properties, listed here including their conditioning sets,
\begin{eqnarray*}
  (P1), \text{past} & \implies & 2\ci 4|\{5,6,7, 8,9\};\\
  (P2), \text{anteriors} &\implies & 2\ci 4|\{5,6,8,9\};\\
  (P3), \text{joint parents} &  \implies & 2\ci 4|\{5,6\};\\
  (P4), \text{parents of the smaller node} & \implies & 2\ci 4|5.
\end{eqnarray*}

\begin{theorem}\label{thm:1}
Let a distribution $\mathcal{P}$ of $(X_n)_{n\in N}$  be a compositional graphoid. Then, for every valid ordering of the nodes of \Greg\!, $\mathcal{P}$ satisfies $(P1)\iff (P2)\iff (P3)\iff (P4)$ with respect to \Greg\!.
\end{theorem}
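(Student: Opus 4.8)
\medskip\noindent\textbf{Proposed proof.}

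When $i$ and $j$ both belong to the context set $v$, every one of $(P1)$--$(P4)$ asserts precisely $i\ci j|v\setminus\{i,j\}$, so nothing is to be shown in that case. I therefore fix a valid ordering of $N$, a missing edge $i\not\sim j$ with $i<j$ and $i$ in the response set $u$, and begin by comparing the four conditioning sets. One always has
\[
 \pa(i)\ \subseteq\ \pa(i,j)\ \subseteq\ \ant(i,j)\ \subseteq\ \pst(i,j),
\]
the only non-routine inclusion being $\ant(i,j)\subseteq\pst(i,j)$: the undirected part of an anterior path consists of full lines, hence runs among context nodes, whose components have higher order than every response component and so lie in the past. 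The second fact I would record is graph-theoretic: since $i$ is a response node, the only edge able to join $i$ to a node in a strictly higher-order component is an arrow into $i$; consequently $\pst(i,j)\setminus\pa(i)$ contains only nodes that are uncoupled from $i$ and come after $i$ in the ordering, $j$ itself being uncoupled from $i$ by hypothesis. Given these preliminaries it is enough to prove $(P4)\implies(P1),(P2),(P3)$ and, conversely, $(P1)\implies(P4)$, $(P2)\implies(P4)$ and $(P3)\implies(P4)$.

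The direction $(P4)\implies(P1),(P2),(P3)$ is short and is the model for every step that enlarges a conditioning set. Let $C$ be any of $\pst(i,j),\ant(i,j),\pa(i,j)$; then $\pa(i)\subseteq C$, $j\notin C$, and each $k\in C\setminus\pa(i)$ satisfies $k\not\sim i$, $i<k$, so $(P4)$ for the pair $i,k$ gives $i\ci k|\pa(i)$. Composition merges these, together with the instance $i\ci j|\pa(i)$ of $(P4)$ for $i,j$, into $i\ci\bigl(\{j\}\cup(C\setminus\pa(i))\bigr)|\pa(i)$, and weak union then produces $i\ci j|\pa(i)\cup(C\setminus\pa(i))=C$; the three choices of $C$ give $(P1),(P2),(P3)$.

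The converse directions rest on the ``local'' statement $(\star)$: for every response node $i$, $i\ci\bigl(\pst(i)\setminus\pa(i)\bigr)|\pa(i)$. Once $(\star)$ is available, $(P4)$ follows for each missing edge $i\not\sim j$: if $j$ is in a later component then $j\in\pst(i)\setminus\pa(i)$ and decomposition of $(\star)$ gives $i\ci j|\pa(i)$ at once, while if $j$ is in $i$'s own component the pertinent $(Pm)$-statement reads $i\ci j|C$ with $\pa(i)\subseteq C\subseteq\pst(i)$, and decomposing $(\star)$ to $i\ci(C\setminus\pa(i))|\pa(i)$ and applying contraction strips $C$ down to $\pa(i)$. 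It thus remains to derive $(\star)$ from each of $(P1),(P2),(P3)$. From $(P3)$ this goes by downward induction along the valid ordering: for a later node $k$ with $i\not\sim k$, $(P3)$ gives $i\ci k|\pa(i,k)$, every node of $\pa(i,k)\setminus\pa(i)$ sits in a still later component, the induction hypothesis furnishes $i\ci(\pa(i,k)\setminus\pa(i))|\pa(i)$ by composition, and contraction yields $i\ci k|\pa(i)$; composing over all such $k$ delivers $(\star)$. From $(P1)$ one has $i\ci k|\pst(i)\setminus\{k\}$ for all such $k$, and $(\star)$ is obtained by the classical ``pairwise implies local'' induction for the undirected structure on $\pst(i)\cup\{i\}$, whose inductive step fuses two statements conditioned on $\pa(i)\cup C\cup\{k_1\}$ and $\pa(i)\cup C\cup\{k_2\}$ by means of the intersection property. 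From $(P2)$ the pairwise statements condition on the intermediate sets $\ant(i,k)$, so one must first widen each of these to $\pst(i)\setminus\{k\}$ --- once again by an induction along the ordering that composes in cases already treated --- before running the same intersection argument. This reconciliation of the pair-dependent conditioning sets $\pst(i,j),\ant(i,j),\pa(i,j)$ occurring in $(P1)$--$(P3)$ is the one genuinely delicate point of the proof, and it is what calls for the inductions along the valid ordering and for the intersection property; both are at our disposal because $\mathcal{P}$ is assumed to be a compositional graphoid.
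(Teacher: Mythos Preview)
Your argument is sound and can be completed as written, but it is organised differently from the paper's proof. The paper establishes the cycle $(P1)\Rightarrow(P2)\Rightarrow(P3)\Rightarrow(P4)\Rightarrow(P1)$, proving each implication directly. You instead prove a star centred at $(P4)$: one uniform composition--weak-union step gives $(P4)\Rightarrow(P1),(P2),(P3)$, and the three converse implications are all routed through the \emph{ordered local} statement $(\star)$: $i\ci\bigl(\pst(i)\setminus\pa(i)\bigr)\mid\pa(i)$ for every response node $i$. This is an attractive reformulation: $(\star)$ is exactly the local Markov property for the response part of the graph, and factoring the three hard directions through it makes transparent why each of $(P1)$--$(P3)$ carries the same information. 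Your derivations of $(\star)$ from $(P1)$ (repeated intersection over $\pst(i)\setminus\pa(i)$) and from $(P3)$ (reverse induction on the component of $k$, composing already-treated parents of $k$ and contracting) are correct and mirror, respectively, a sub-argument of the paper's $(P1)\Rightarrow(P2)$ step and the paper's $(P3)\Rightarrow(P4)$ step. The one place your write-up is thin is $(P2)\Rightarrow(\star)$: to ``widen $\ant(i,k)$ to $\pst(i)\setminus\{k\}$'' you in fact need the same two-layer induction the paper uses in its $(P2)\Rightarrow(P3)$ proof, with a base case that treats an entire context component at once via intersection (since for $k\in v$ the set $\ant(k)$ is the whole context component of $k$, not a set of strictly later nodes). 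That detail is missing from your sketch but is routine once noticed; with it filled in, your star-shaped proof and the paper's cyclic proof use the same graphoid moves in a different order, the main gain of your version being the explicit appearance of the local property $(\star)$.
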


Before presenting the proof, we provide the following example related to the graph of Figure~\ref{fig:1}, in which we follow
the same method as in the proof of this theorem that is using compositional graphoid properties  to obtain the equivalence of the independence statements implied by the different pairwise Markov properties:

{\bf $(P1)\Rightarrow(P2):$}  By definition,   we have $2\ci 4\cd \{5,6,7, 8,9\}$ and $4\ci 7\cd \{5,6,8,9\}$. Contraction implies $4\ci\{2,7\}\cd \{5,6,8,9\}$. Then, decomposition  gives that $2\ci 4\cd \{5,6,8,9\}$.

{\bf $(P2)\Rightarrow(P3):$} Nodes $6, 8$ and $9$ are in the set $\ant(2,4)\setminus\pa(2)$, and it holds that  $2\ci 8\cd \{5,6,9\}$, $2\ci 9\cd \{5, 6,8\}$, and
$2\ci 6 \cd \{5,8,9\}$. Intersection,  used repeatedly, implies $2\ci\{6, 8,9\}\cd 5$.
 This together with $2\ci 4\cd \{5,6,8,9\}$  implies by  contraction $2\ci\{4,6,8,9\}\cd 5$. By decomposition, $2\ci\{4,6\}\cd 5$ so that  by weak union,   $2\ci 4\cd \{5,6\}$.

{\bf $(P3)\Rightarrow(P4):$} By contraction,  $2\ci 9\cd 5$ and $2\ci 6\cd \{9,5\}$,  imply $2\ci \{6,9\}\cd 5$, which by decomposition implies $2\ci 6\cd 5$.  By contraction,  this and  $2\ci 4\cd \{5,6\}$ imply $2\ci\{4,6\}\cd 5$ so that by  decomposition $2\ci 4\cd 5$.

{\bf $(P4)\Rightarrow(P1):$} Composition used repeatedly for statements $2\ci 4\cd 5$, $2\ci 6 \cd 5$,  $2\ci 7\cd 5$, $2\ci 8\cd 5$, $2\ci 9\cd 5$ gives $2\ci\{4,6,7,8,9\}\cd 5$. Weak union implies $2\ci 4\cd \{5,6,7,8,9\}$.

\begin{proof}[Proof of Theorem \ref{thm:1}]
{\bf $(P1)\Rightarrow(P2):$}  By definition, $\pst(i,j)=\pst(i)\setminus\{j\}$. Let $k\in \{j\}\cup\pst(i)\setminus \ant(i,j)$, then $k\nsim i$ and  $i\ci k\cd \pst(i)\setminus\{k\}$. For another possible node $l \in\pst(i)\setminus \ant(i,j)$ holds also that $i\ci l\cd \pst(i)\setminus\{l\}$. By the intersection property, we obtain $i\ci\{k,l\}\cd \pst(i)\setminus\{k,l\}$. By the same method and an inductive argument, for all members of $\pst(i)\setminus \ant(i,j)$, we obtain $i\ci\pst(i)\setminus\ant(i,j)\cd \ant(i,j)$. Now if $i$ and $j$ are in different connected components then by the same method, we use the intersection property to obtain $i \ci\{j\}\cup\pst(i)\setminus \ant(i,j)\cd \ant(i,j)$, and if $i$ and $j$ are in the same connected component, we use the contraction property with $i\ci j\cd \pst(i)$ to obtain the same independence statement. The decomposition property yields $i\ci j\cd \ant(i,j)$.

{\bf $(P2)\Rightarrow(P3):$} Suppose that $i$ is in the lower or equal index component $g_r$ than that of $j$. Let also $k\in\ant(i,j)\setminus\pa(i)$. Notice that $k\nsim i$ since $k\not\in\pa(i)$ and if, $k\in\ant(j)\setminus\ant(i)$ and adjacent to $i$ then $i$ is of a higher order than $j$, which is not possible. We prove that $i\ci k\cd \ant(i)\setminus\{k\}$:

If $k\in\ant(i)$ then $\ant(k)\subset\ant(i)$ and subsequently $\ant(i,k)=\ant(i)$. The result is then obvious from the assumption. Thus assume that $k\in\ant(j)\setminus\ant(i)$. We then use   reverse induction on the order of the connected components in which $k$ lies to prove the claim. For the base, we have that $k$ is in $g_v$, and let $g_{v,k}$ be the connected component containing $k$ in $g_v$. It holds that $\ant(k)=g_{v,k}$. For every member $l$ of $g_{v,k}$, it holds that $i\ci l\cd \ant(i)\cup g_{v,k}$. By intersection, for all such statements, we obtain $i\ci g_{v,k}\cd \ant(i)$. Decomposition implies the result.

Now suppose that $k\in g_q$ and, for every $l\in\ant(k)\setminus\ant(i)$, by the induction hypothesis, it holds that $i\ci l\cd \ant(i)$. By the composition property for all such $i\ci l\cd \ant(i)$, we obtain that $i\ci\ant(k)\setminus\ant(i)\cd \ant(i)$. This together with $i\ci k\cd \ant(i,k)$, by using the contraction property, implies that $i\ci\{k\}\cup\ant(k)\setminus\ant(i)\cd \ant(i)\setminus \{k\}$. The decomposition property gives $i\ci k\cd \ant(i)\setminus \{k\}$.

Now by the intersection property for all such $i\ci k\cd \ant(i)\setminus\{k\}$, we obtain $i\ci \ant(i,j)\setminus\pa(i)\cd\pa(i)$. This together with $i\ci j\cd\ant(i,j)$, using the contraction property, implies $i\ci\{j\}\cup\ant(i,j)\setminus\pa(i)\cd\pa(i)$. By the decomposition property, this  gives $i\ci\{j\}\cup\pa(j)\setminus\pa(i)\cd\pa(i)$. The weak union property now implies $i\ci j\cd\pa(i,j)$.

{\bf $(P3)\Rightarrow(P4):$} We prove the result by reverse induction on the order of the connected component that contains $j$. If $j$ is in $g_v$ then the result trivially holds. Thus suppose that $j$ lies in $g_q$. By induction hypothesis for each $k\in\pa(j)\setminus\pa(i)$ we have $i\ci k\cd\pa(i)$. By the composition property, it is implied that  $i\ci\pa(j)\setminus\pa(i)\cd\pa(i)$. This together with $i\ci j\cd\pa(i,j)$, by the contraction property, implies $i\ci\{j\}\cup\pa(j)\setminus\pa(i)\cd\pa(i)$. The decomposition property now gives $i\ci j\cd\pa(i)$.

{\bf $(P4)\Rightarrow(P1):$} Suppose that $k\in\{j\}\cup\pst(i,j)\setminus\pa(i)$. We have that $k\nsim i$. Therefore, $i\ci k\cd\pa(i)$. Using the composition property for all such statements for members of $\{j\}\cup\pst(i,j)\setminus\pa(i)$, we obtain $i\ci\{j\}\cup\pst(i,j)\setminus\pa(i)\cd\pa(i)$. The weak union property implies $i\ci j\cd \pst(i.j)$.
\end{proof}
Based on the equivalence of the global Markov property and the pairwise Markov property $(P2)$, proven in \citet{SadegLau14}, we have the following corollary:
\begin{coro}\label{cor:1}
Let a distribution $\mathcal{P}$  of $(X_n)_{n\in N}$ be a compositional graphoid. Then, for every valid ordering of the nodes of \Greg, $\mathcal{P}$ satisfies the global Markov property with respect to \Greg if and only if it satisfies any one of $(P1)$, $(P2)$, $(P3)$, or $(P4)$.\end{coro}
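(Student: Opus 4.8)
The plan is to prove Corollary \ref{cor:1} by treating the global Markov property as a fifth node in the chain of equivalences established by Theorem \ref{thm:1}. Since Theorem \ref{thm:1} already gives us $(P1)\iff(P2)\iff(P3)\iff(P4)$ for any compositional graphoid $\mathcal{P}$ and any valid ordering, all four pairwise properties are logically interchangeable. Therefore it suffices to connect just \emph{one} of them to the global Markov property; the equivalence to the remaining three follows immediately by transitivity. I would single out $(P2)$, because the cited result of \citet{SadegLau14} is stated precisely for the anterior-based pairwise property.

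First I would invoke \citet{SadegLau14}, which establishes, for the relevant class of graphs containing regression graphs and under the compositional graphoid assumption, that the global Markov property is equivalent to the pairwise Markov property $(P2)$ with respect to \Greg. This is the substantive ingredient, and I would assume it wholesale rather than reproduce it. Next I would note that $\mathcal{P}$ being a compositional graphoid is exactly the hypothesis under which Theorem \ref{thm:1} applies, so that $(P2)\iff(P1)$, $(P2)\iff(P3)$, and $(P2)\iff(P4)$ all hold simultaneously. Chaining these biconditionals with the $(P2)\iff$ global equivalence yields that the global Markov property is equivalent to each of $(P1)$, $(P2)$, $(P3)$, $(P4)$ individually, which is the claim.

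The one point requiring care is a matter of quantifier alignment rather than a genuine obstacle: both Theorem \ref{thm:1} and the cited equivalence are asserted ``for every valid ordering of the nodes,'' so I would make sure the same fixed valid ordering is used throughout the argument, ensuring that the conditioning sets $\pst(i,j)$, $\ant(i,j)$, $\pa(i,j)$ and $\pa(i)$ appearing in the four properties are all computed relative to that ordering. Because the global Markov property is itself ordering-independent (it is a property of separation in the graph, not of any node labelling), the equivalence then holds for every valid ordering as stated. I expect no real difficulty here; the corollary is essentially a transitivity argument, and the only thing to verify is that the hypotheses of the theorem and of the cited lemma coincide, which they do since both require exactly the compositional graphoid structure.
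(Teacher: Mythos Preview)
Your proposal is correct and matches the paper's own argument essentially verbatim: the paper also derives the corollary by citing \citet{SadegLau14} for the equivalence of the global Markov property with $(P2)$ and then invoking Theorem~\ref{thm:1} for the remaining equivalences. Your added remark about the ordering-independence of the global property is a reasonable clarification, though the paper does not make it explicit.
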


It is a consequence of  the above results that in case one of the four pairwise
independence properties is satisfied,  all other three  hold as well so
that, depending on the purpose of an enquiry, each one of them may be used. The results have been applied implicitly in  model-fitting strategies which also include checks
of necessary conditions for properties  $(v)$ to $(vii)$; see e.g.~\citet{WerSadeg12}, Appendix;
\citet{WerCox13}, Appendix. 

There is an additional independence property, needed
for studying dependences induced by  a given edge-minimal graph,
\begin{itemize}	
\item [\n]  \hspace{6mm} $(vii)$ singleton transitivity: ($j\ci k|c$ {\em and } $j\ci k|ic) {\implies}  (i\ci j|c$ {\em or } $i\ci k|c$),
\end{itemize}	
where $i,j,k$ are distinct nodes of node set $N$ and $c\subseteq N\setminus\{i,j,k\}$.
 Probability distributions satisfying this property of singleton transitivity in addition to $(i)$ to $(vi)$  are  for instance regular
joint Gaussian distributions; see \citet{LlenMat07}, and totally positive distributions with positive support everywhere;  see \citet{FallatEtal16}.

The above seven properties of a density generated over \Greg ensure that
the graph may be used to trace
 pathways of dependences, see \citet{Wer12,Wer15}. Such a sequence of
regressions has therefore been said to be  traceable.

\section{Discussion}
For regression graphs and for distributions that are compositional graphoids, the equivalence of four different  pairwise independence properties has been proven,   two of which, $(P1)$ and $(P4)$, require a valid ordering of all variables.   Together with the known equivalence of one of them, $(P2)$, to the global Markov property, this gives essential  insights  into  regression-graph structures and, at the same time,  into  the structural independence constraints of such distributions that is  into   independences which hold irrespective of specific  parameter values for a given regression graph model. 

For the same set of context variables, different special parametrizations for the directed dependences of main interest may arise,  for instance,  just with differing baseline conditions that is with different realisations for the context variables. Hence, especially for comparing results from several  studies with the same variables, it is essential
to be able to distinguish between structural independences and those which result only  with specific constellations of parameters. The former are captured
by a generating regression graph and the latter by parameter estimates  given a set of data.

The conditioning set is largest for the past of  any two non-adjacent nodes, $(P1)$,
and smallest for the subset of this past containing just the parents of one of the two nodes, $(P4)$. The two  other properties have  conditioning sets  of intermediate size, they may  be larger than those for $(P4)$ but they are never larger than those  for $(P1)$. This implies for a  regression graph what follows
for the generated distribution by its factorization in terms of connected components: variables outside the past of a missing edge 
need not be considered for its independence interpretation.

For any given joint response, property $(P4)$ justifies the elimination of unimportant regressors for each response component separately, but only with property $(P3)$, that is by  conditioning on the joint parents of two response nodes, possible estimation problems  are avoided, such as in seemingly unrelated regressions. By using $(P3)$, a reduced model is replaced by a  covering model which is simpler concerning estimation properties; see \citet{CoxWer90}. In the Gaussian case for a reduced model of seemingly unrelated regressions, the covering model is 
the so-called general linear model which has identical sets of regressors for each response pair; see \citet{Anderson58}.

For tracing pathways of dependence, the pairwise property given the anterior set of a node, $(P2)$ is the essential one. Intervening early at an inner node of an
anterior path  in an edge-minimal graph will interrupt a pathway of development.  For instance with  exclusively strong risk factors  along such a  path, an early intervention may stop an otherwise  disastrous  accumulation of risks.

The main  importance of the pairwise property given the past of a node pair, $(P1)$, is that independence constraints and pairwise dependences are defined  without any change in this largest conditioning set. This permits to extend a given connected graph into a complete graph by keeping  its given valid node ordering: all missing full lines in the context set are added, in each response component dashed lines are added until its subgraph is complete,  then for each remaining node pair with $i<j$, an arrow pointing to $i$ from $j$ is added. This helps to understand which constraints have been imposed on this completed graph to obtain the given missing edges.

The completed graph leads also to the  factorization of a  generated distribution in terms of connected components  but without any conditional independence constraints. Such a covering model is sometimes called  a saturated regression graph model. This saturated model is the appropriate basis  for discussions with researchers in the field on which alternative orderings may possibly  be more in line with the available knowledge about the variables under study.  A disadvantage of $P1$ is for efficient testing: it may include many variables as mere noise; see \citet{Altham84}.

For general types of  distributions generated over a regression graph,  efficient algorithms still need to be developed  to decide whether they satisfy  the independence properties $(v)$ to $(vii)$. These are not common to all probability distributions but hold, for instance, in joint Gaussian distributions and in totally positive distributions with positive support everywhere. Joint Bernoulli distributions satisfying other types of conditions will in a future paper be shown to also satisfy the three additional properties.  In another future paper,  additional conditions will  be given under which  these
distributions contain precisely those independences  provided as  structural by an edge-minimal regression graph.

\subsection*{Acknowledgments}
Work of the first author was partially supported by grant $\#$FA9550-14-1-0141 from the U.S. Air Force Office of Scientific
Research (AFOSR) and the Defense Advanced Research Projects Agency (DARPA). The second author thanks GM Marchetti and M Mouchart for stimulating discussions and  comments.

 \renewcommand{\baselinestretch}{1.1}
\renewcommand\refname{\normalsize References}

\end{document}